\documentclass[12pt,a4paper]{amsart}

\usepackage{amsfonts, amsthm}
\usepackage{enumerate}
\usepackage[english]{babel}
\usepackage{amsmath}
\usepackage{amssymb}
\usepackage{array}
\usepackage[all]{xy}

\swapnumbers

\theoremstyle{theorem}
\newtheorem{theorem}{Theorem}[section]
\newtheorem{proposition}[theorem]{Proposition}
\newtheorem{lemma}[theorem]{Lemma}
\newtheorem{corollary}[theorem]{Corollary}

\theoremstyle{definition}
\newtheorem{definition}[theorem]{Definition}

\newcommand{\eee}{\ensuremath{\mathcal{E}}}

\newcommand{\mmm}{\ensuremath{\mathcal{M}}}

\newcommand{\ppp}{\ensuremath{\mathcal{P}}}

\newcommand{\ttt}{\ensuremath{\mathcal{T}}}
\newcommand{\uuu}{\ensuremath{\mathcal{U}}}
\newcommand{\vvv}{\ensuremath{\mathcal{V}}}

\newcommand{\zz}{\ensuremath{\mathfrak{Z}}}

\newcommand{\PM}{\ensuremath{\mathsf{ProbMet_{}}}}

\newcommand{\met}{\ensuremath{\mathsf{Met}}}

\newcommand{\N}{\ensuremath{\mathbb{N}}}

\renewcommand{\inf}{\myinf}

\DeclareMathOperator{\cl}{cl}

\DeclareMathOperator*{\myinf}{in\vphantom{p}f}

\renewcommand{\a}{\ensuremath{\alpha}}
\renewcommand{\b}{\ensuremath{\beta}}

\newcommand{\ve}{\ensuremath{\varepsilon}}

\renewcommand{\l}{\ensuremath{\lambda}}

\def\rw{\rightarrow}

\usepackage{pdfsync}
\begin{document}

\title{The category of Probabilistic metric spaces}
\author{E. Colebunders, R. Lowen}
\date{}
\maketitle

\vspace{.5cm}

\begin{center}
Abstract
\end{center}

\vspace{.1cm}

 \noindent \scriptsize{The paper is devoted to a categorical study of the category $\PM$ of probabilistic metric spaces.
The study is based on an isomorphic description of the category of probabilistic metric spaces. 
 The isomorphic description was obtained in \cite{CL2} and is in terms of objects that are sets endowed with a collection of distances, where
  the distances involved do not satisfy the triangle inequality but fulfil a mixed triangle condition instead. The morphisms are levelwise non-expansive maps.
 We show that the category of probabilistic metric spaces is a monotopological category over $\mathsf{Set}$. We describe the regular closure on a space $X$ in $\PM$ and prove that it coincides with the closure in the underlying strong topology.  This enables us to characterize the class $\mathcal{E}$ of all epimorphisms as the dense maps and the class $\mathcal{M}$ of all regular monomorphisms as the closed embeddings in terms of the closure operator. 
  We prove that the category $\mathsf{Met^{\infty}}$ of extended metric spaces with non-expansive maps is both coreflectively and reflectively embedded in $\PM.$

\vspace{.8cm}

 \noindent \footnotesize {Keywords: Probabilistic metric space in terms of collections of distances; Mixed triangle inequality; Categorical properties of probabilistic metric spaces and non-expansive maps; Regular closure; Epimorphisms; Cowell-poweredness; Embedding of extended metric spaces.}\\
 
\noindent {Mathematics Subject Classification: 54E70; 54E35; 54A05.}

\section{Introduction}
The paper is devoted to a categorical study of the category $\PM$ of probabilistic metric spaces and its subcategory $\mathsf{Met^{\infty}}$ of extended metric spaces.
The study is based on an isomorphic description of the category of probabilistic metric spaces in terms of objects that are sets endowed with a collection of distances labeled by the unit interval $]0,1]$ and with morphisms that are levelwise non-expansive. The distances involved do not satisfy the triangle inequality but fulfil a mixed triangle condition instead. This isomorphic description was given in \cite{CL2}, where it was derived from the study of probabilistic metrizability of uniform approach spaces.

 We show that $\PM$ is a monotopological category over $\mathsf{Set}$. We describe the regular closure on a space $X$ in $\PM$ and prove that it coincides with the closure in the strong topology underlying $X.$  This enables us to characterize the class $\mathcal{E}$ of all epimorphisms as the dense maps and the class $\mathcal{M}$ of all regular monomorphisms as the closed embeddings in terms of the closure operator.  It follows that $\PM$ is an $(\mathcal{E}, \mathcal{M})$-category, with $\eee$ the dense maps and $\mmm$ the closed embeddings that is cowell-powered. We prove that the category $\mathsf{Met^{\infty}}$ of extended metric spaces with non-expansive maps is both coreflectively and reflectively embedded in $\PM.$

 \newpage

\section{Preliminaries}

For background on concrete categories we refer to \cite{AHS}. Information on closure operators can be found in \cite{CGT}, \cite{DGT}, \cite{DG}, \cite{DT}. 
For more information on probabilistic metric spaces we refer to \cite{SS}, \cite{LSY}, \cite{N}, \cite{CS}. 

First we fix some notations.
A function $d: X \times X \rw [0, \infty]$  that is zero on the diagonal  is called a \emph{distance.} 
An \emph{extended pseudo metric} on a set $X$ is a map $d: X \times X \rw [0, \infty]$ which is zero on the diagonal, satisfies the triangle inequality and symmetry. If moreover $d$ satisfies separation, meaning it is only zero on the diagonal, it is called an extended metric and the category of extended metric spaces with non-expansive maps is denoted $\mathsf{Met}^\infty$.

We recall some terminology from \cite{SS} and \cite{LSY}. 

A function $\varphi: [0,\infty] \rw [0,1]$ is called a \emph{distance distribution} if $\varphi$ is monotone,  $\varphi(0) = 0, \varphi(\infty) =1$ and $\varphi$ is left-continuous on $]0,\infty[$.

A binary operation $*$ on the interval $[0,1]$ is a \emph{continuous t-norm} if $([0,1], *, 1)$ is a commutative monoid, which is continuous as a function on $[0,1]^2$ to $[0,1]$ with respect to the usual topologies and satisfies $p * q \leq p' * q'$ whenever $p \leq p'$ and $q \leq q'$ in $[0,1].$

A \emph{probabilistic metric space} is a set $X$ endowed with a map
\begin{equation}\label{PrMet}
\a: X \times X \times [0,\infty] \rw [0,1]
\end{equation}
and a continuous t-norm $*$ such that
\begin{enumerate}
\item [{(P1)}] $\a(x,y,-): [0,\infty] \rw [0,1] $ is a distance distribution,
\item [{(P2)}] $\a (x,x,- ) = \ve_0,$ the largest distance distribution,
\item [{(P3)}] $\a(x,y,r) = \a(y,x,r),$
\item [{(P4)}] $\a(x,y,-) = \ve_0 \Rightarrow  x = y,$
\item [{(P5)}] $\a(y,z,r) * \a(x,y,s) \leq \a(x,z,r+s),$
\end{enumerate}
for all $x,y,z \in X, r,s \in [0, \infty].$

A map $f:(X,\a) \rw (Y,\b)$ between probabilistic metric spaces is \emph{non-expansive} if 
\begin{equation}\label{nonex}
\a(x,x',t) \leq \b(f(x), f(x'),t),
\end{equation}
for all $x,x' \in X, t \in [0, \infty].$

\emph{In this paper we will always assume that the probabilistic metric spaces are defined with respect to some continuous t-norm  which we denote by $*$.}

The category of probabilistic metric spaces with non-expansive maps is denoted by $\PM.$

\section{An isomorphic description of the category of Probabilistic Metric Spaces}

Several attempts have been made to describe the category of probabilistic metric spaces with non-expansive maps isomorphically in terms of collections of extended metrics with levelwise non-expansive maps.  However in order to obtain an isomorphism based on extended metrics, one had to make restrictions regarding the axiom (P5), \cite{SS}, \cite{N}.

Here we stay with the axiom (P5) formulated for an arbitrary continuous t-norm as in \eqref{PrMet} and we work with collections of distances where the triangle condition of the members of the collection is replaced by a mixed triangle condition of the 
collection itself. 
We start from the isomorphic description of $\PM$ as in \cite{CL2} where the result was deduced from the characterization of probabilistic metrizability of (uniform) approach spaces. Since explicit proofs for collections of distances were omitted in \cite{CL2}, for clarity we will include explicit proofs here.

\begin{definition}\label{Z}
Consider the category which we temporarily denote $\mathfrak{Z}$ with objects, $(X, \{d_\l \ | \ 0< \l \leq 1\}),$ sets endowed with a collection of distances numbered by $ 0< \l \leq 1,$ where $\{d_\l \ | \ 0< \l \leq 1\}$
satisfies the following conditions: {\bf(US)}, {\bf(UD)}, {\bf(UT)}  for some continuous t-norm $*$  and {\bf(UH)}.\\

\item [{\bf(US)}] Symmetry: For every $0 < \l \leq 1$ and for $x,y \in X$, we have 
$$d_\l (x, y) = d_\l (y, x).$$
\item [{\bf(UD)}] Density:  For every $0 < \l \leq 1$ 
$$
d_\l = \inf _{\rho < \l} d_\rho.
$$ 
\item[{\bf(UT)}] t-Norm: For $0< \ve \leq 1,$ \ $0 < \l \leq 1$ and $0 < \l' \leq 1$ with 
$$
 (1 - \l') * ( 1 - \l) > 1 - \ve,
$$
 and for $x,y,z \in X$ we have
$$
d_\ve(x, z) \leq d_\l(x, y) + d_{\l'}(y, z).
$$
\item [{\bf(UH)}] Separation:
$$
x \not = y \ \text{then} \  \exists\l, \  0 < \l \leq 1, \ d_\l (x,y) > 0.
$$

Using the terminology of \cite{CL2} a morphism  $f : (X, \{d_\l \ | \ 0< \l \leq 1\}) \rw (Y, \{d'_\l \ | \ 0< \l \leq 1\})$ is called \emph{levelwise non-expansive}  if
\begin{equation}\label{morphisms}
\forall \l, \  d'_\l(f(x), f(x')) \leq d_\l (x,x'), \ \text{whenever} \ x, x' \in X.
\end{equation}
\end{definition}

\vspace{.7cm}

Let $(X,\a,*)$ be a probabilistic metric space. As in \cite{N}
\begin{equation}\label{distanceassociated}
(X, \{ d_\l \ | \ 0 < \l \leq 1 \})  
\end{equation}
 is associated with $(X,\a,*)$ by
$$  d_\l(x,y) = \inf \{ 0 \leq \gamma < \infty \ | \ \a(x,y,\gamma) > 1 - \l \}.$$

\begin{proposition}
We have the following equivalence, 
for $0 \leq \gamma < \infty$ and for $0 < \l \leq 1$ 
 \begin{equation}\label{isoequiv}
d_\l (x,y) < \gamma \Leftrightarrow \a(x,y,\gamma) > 1-\l.
\end{equation}
\begin{proof}
Suppose $d_\l (x,y)< \gamma,$ then there exists $\mu < \gamma$ satisfying $\a(x,y,\mu) > 1-\l.$
This implies $\a(x,y,\gamma) > 1-\l$ since $\a$ is nondecreasing.

To prove the other implication, assume $\a(x,y,\gamma) > 1-\l.$ Left continuity of $\a$ implies that there exists $\mu < \gamma$ with $ \a(x,y,\mu) > 1-\l.$ This implies\\
 $d_\l(x,y) \leq \mu < \gamma.$
\end{proof}
\end{proposition}

\begin{proposition}\label{delta} 
The transition in \eqref{distanceassociated} defines a concrete functor  $$\Delta: \PM \rw \frak{Z} $$ which maps an object $(X,\a, *)$ to  $(X, \{ d_\l \ | \ 0 < \l \leq 1 \})$ satisfying the conditions
{\bf(US)}, {\bf(UD)}, {\bf(UT)} and {\bf(UH)}.

\begin{proof}
{\bf{(US)}} Follows from (P3), since $\a(x,y,\gamma) = \a(y,x,\gamma)$ for all $x,y,\gamma.$\\
{\bf(UD)} Suppose $0 < \l' \leq \l \leq 1$ and $x \in X.$ First observe that in the infinite case, when $d_\l(x,y) = \infty$ then $\{ \gamma \ | \ \a(x,y,\gamma) > 1 - \l \} = \emptyset $ and then clearly also $\{ \gamma \ | \ \a(x,y,\gamma) > 1 - \l' \} = \emptyset $ and $d_{\l'}(x,y) = \infty.$
In the finite case  it is sufficient to apply \eqref{isoequiv} in order to conclude that $d_\l  \leq  d_{\l'}.$\\
From the previous observation we already have $d_\l \leq \inf _{\rho < \l} d_\rho.$
We show the reverse inequality. Let $x,y \in X$. In the infinite case with $d_\l(x,y) = \infty,$ there is nothing to show.
Next assume $d_\l(x,y) <  \gamma < \infty.$ Applying \eqref{isoequiv} we have $\a(x,y,\gamma) > 1 - \l.$ Then we can choose $\rho < \l$ with 
$\a(x,y,\gamma) > 1 - \rho.$ Clearly $d_\rho(x,y) < \gamma$ which implies $ \inf _{\rho < \l} d_\rho(x,y) < \gamma.$\\
{\bf(UT)} 
Let $0 < \ve \leq 1,$ \  $0 < \l \leq 1$ and $0 < \l' \leq 1$ satisfy
$$ (1 - \ve ) < (1 - \l') * ( 1 - \l).$$ 
For $x,y,z \in X$ arbitrary,
if the righthandside of the inequality $d_\ve(x,z) \leq d_\l(x,y) + d_{\l'}(y,z)$ is infinite, there is nothing to prove. Next assume $d_\l(x,y) < \gamma$ and  $d_{\l'}(y,z) < \gamma'$. Then we have $\a(y,z, \gamma') > 1 - \l'$ and $\a(x,y, \gamma) > 1 - \l.$
Now apply (P5)
$$
\a(x,z,\gamma + \gamma') \geq \a(y,z, \gamma')  * \a(x,y, \gamma) \geq (1 - \l') * ( 1 - \l)  > 1-\ve.
$$
Finally we get $d_\ve(x,z) < \gamma + \gamma'.$\\
{\bf(UH)} 
Suppose $x \not = y.$ That there exists $0 < \l \leq 1$ with $d_\l(x,y) > 0$ follows from the fact that by (P4) $\exists \gamma > 0$ with $\a(x,y,\gamma) \not = 1$ and hence $\exists \l$ with $\a(x,y,\gamma )\not  > 1 - \l.$\\

Next suppose $f: (X,\a) \rw (Y, \a')$ is a non-expansive map.
First observe that in case $d'_\l (f(x), f(x'))= \infty$ then for every $\gamma < \infty,$
$ \a(f(x), f(x'), \gamma) \not > 1 - \l.$
This implies that for every $\gamma < \infty, \ \a(x,x',\gamma) \not > 1 - \l.$ Hence we can conclude that $d_\l(x,x') = \infty.$

 In the finite case, applying \eqref{isoequiv} we obtain the following equivalences:
  \begin{eqnarray*}\label{morfequiv}
  f: (X,\a) \rw (Y, \a')\  \text{is non-expansive} \ &\Leftrightarrow & \forall t, \ \a(x,x',t) \leq \a'(f(x), f(x'), t )\\
&\Leftrightarrow& \forall t, \forall \l : \ \ \a(x,x', t) > 1 - \l  \Rightarrow\\
&& \a'(f(x), f(x') , t) > 1 - \l \\
&\Leftrightarrow& \forall t,  \forall \l :  \  d_\l(x,x') < t \Rightarrow  d'_\l(f(x), f(x')) < t\\
&\Leftrightarrow& \forall \l, \  d'_\l(f(x), f(x')) \leq d_\l (x,x').\\
\end{eqnarray*}
It follows that $\Delta (f) : (X, \{ d_\l \ | \ 0 < \l \leq 1 \}) \rw (Y, \{ d'_\l \ | \ 0 < \l \leq 1 \}) $ is levelwise non-expansive.
\end{proof}
\end{proposition}

Next we investigate the other direction.
Let $(X, \{ d_\l \ | \ 0 < \l \leq 1 \})$ be an object as defined in \ref{Z}. We associate a probabilistic metric space $(X, \b, *)$
 with it as follows:
 $\b (x,y, \infty ) = 1$ and further for $\gamma < \infty$:
\begin{equation}\label{isomorphbeta}
\b(x,y,\gamma) = \sup \{1 - \l \ | \ d_\l(x, y) < \gamma\} = 1 - \inf  \{\l \ | \ d_\l (x, y) < \gamma\}.
\end{equation}

\begin{proposition}
For $\gamma< \infty$ we have
\begin{equation} \label{morphi}
\b(x,y,\gamma) > 1 - \l   \Leftrightarrow  d_\l(x, y) < \gamma.
\end{equation}
\begin{proof} 
Let  $\sup \{1 - \rho \ | \ d_\rho(x,y) < \gamma\}  > 1 - \l.$ Then $\exists \rho < \l, \ d_\rho(x,y) < \gamma$ and applying {\bf(UD)} we have 
$d_\l(x,y) < \gamma.$

For the other implication,  assume $d_\l(x,y) < \gamma.$ Then again applying {\bf(UD)}\\
 $\exists \rho < \l, 
 \  \ d_\rho(x,y) < \gamma.$
Hence $1 - \l < 1 - \rho \leq \b(x,y,\gamma).$

\end{proof}
\end{proposition}

Before we show that the transition defines a concrete functor, we recall a lemma from \cite{CL2}.
\begin{lemma}\label{star}
Let $([0,1], *)$ be a continuous t-norm. For all $a,b,d \in ]0,1],$ the following are equivalent:
\begin{enumerate}
\item $d \geq a * b$
\item $\forall \l,\l' \in ]0,1], $\  if $a > 1-\l, \ b > 1-\l'$ then $d \geq (1-\l') * (1-\l)$
\item $\forall \rho  \in ]0,1],$ \ if $a * b > 1-\rho$ then $d \geq 1-\rho.$
\end{enumerate}
\end{lemma}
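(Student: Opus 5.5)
The plan is to prove the cycle of implications $(1)\Rightarrow(2)\Rightarrow(3)\Rightarrow(1)$. The tools are monotonicity and continuity of $*$, together with the density of $]0,1]$.

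\emph{$(1)\Rightarrow(2)$.} Suppose $d \geq a*b$, $a > 1-\l$ and $b > 1-\l'$. Monotonicity of $*$ gives
$$a*b \geq (1-\l)*(1-\l') = (1-\l')*(1-\l),$$
where the last equality is commutativity. Hence $d \geq (1-\l')*(1-\l)$.

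\emph{$(2)\Rightarrow(3)$.} Let $\rho\in\,]0,1]$ with $a*b > 1-\rho$. I will use continuity of $*$ at $(a,b)$ to produce $a' < a$ and $b' < b$ such that $a'*b' > 1-\rho$.
\begin{itemize}
\item If $a>0$ and $b>0$, take $a'$ and $b'$ slightly below $a$ and $b$. Put $\l = 1-a'$ and $\l' = 1-b'$; these lie in $]0,1]$ provided $a',b' \geq 0$.
\item Then $a > 1-\l$ and $b > 1-\l'$, so (2) gives $d \geq (1-\l')*(1-\l) = b'*a' > 1-\rho$.
\item In particular $d \geq 1-\rho$.
\end{itemize}
The one point to watch is staying inside the range: we need $a',b' \in [0,1)$ so that $\l,\l' \in\,]0,1]$. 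Since $a,b>0$ we can pick $a',b'$ in $[0,a)$ and $[0,b)$ respectively, and $a' < a \leq 1$ keeps $\l>0$. The hypothesis $a,b\in\,]0,1]$ is exactly what makes this possible.

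\emph{$(3)\Rightarrow(1)$.} Suppose $d < a*b$. Choose $\rho\in\,]0,1]$ with
$$d < 1-\rho < a*b,$$
which is possible because the interval $]d, a*b[$ is a nonempty subset of $[0,1)$. Then (3) gives $d \geq 1-\rho$, a contradiction. Hence $d\ge a*b$.

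I expect the only delicate step to be $(2)\Rightarrow(3)$. There the strict inequality must be pushed strictly below $a$ and $b$ via continuity of the t-norm, while keeping the parameters $\l,\l'$ in $]0,1]$. The rest is order bookkeeping.
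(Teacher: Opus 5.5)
Your proof is correct and complete. The paper gives no proof of this lemma; it only recalls it from the authors' earlier work. So there is no argument of theirs to compare your route with.

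Your cycle $(1)\Rightarrow(2)\Rightarrow(3)\Rightarrow(1)$ is the natural one. It uses monotonicity and commutativity of $*$, then approximation of $a*b$ from below via continuity at $(a,b)$, then a choice of $1-\rho$ in the nonempty interval $]d,a*b[$.

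Two small remarks:
\begin{enumerate}
\item In $(2)\Rightarrow(3)$ you only use that $a*b$ can be approximated by values $a'*b'$ with $a'<a$ and $b'<b$, so left-continuity of $*$ is all that is needed there.
\item Contrary to your closing comment, the restriction $a,b,d\in\,]0,1]$ is not essential. If $a=0$ or $b=0$, then $a*b=0$: (1) holds trivially, and (2) and (3) are vacuous, since their hypotheses would require $0>1-\lambda\geq 0$ or $0>1-\rho\geq 0$. Your $(3)\Rightarrow(1)$ step also works verbatim for $d=0$. This matters because in Proposition \ref{Phi} the lemma is applied to values of $\beta$ that may well be $0$, so the trivial boundary cases are exactly what close that application.
\end{enumerate}
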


\begin{proposition}\label{Phi}
The transition in \eqref{isomorphbeta} defines a concrete functor $$\Phi: \mathfrak{Z} \rw \PM$$ which maps an object $(X, \{ d_\l \ | \ 0 < \l \leq 1 \})$ to $(X, \b, *)$. 
\begin{proof}
We first show that due to the properties of  $(X, \{ d_\l \ | \ 0 < \l \leq 1 \})$ listed above, $(X, \b, *)$  as in \eqref{isomorphbeta} defines a probabilistic metric space the sense of \eqref{PrMet}.

$\b(x,y,0) =0$ as $d_\l(x,y) \geq 0$ and $\b(x,y,\infty) =1$ by definition.

$\b(x,y,-)$ is nondecreasing as for $\gamma \leq \gamma',$  $\{ \l \ | \ d_\l(x,y) < \gamma\} \subseteq \{ \l \ | \ d_\l(x,y) < \gamma'\}.$

$\b(x,y,-)$ is left continuous on $]0, \infty[$. That $\sup_{s < \gamma} \b(x,y,s) \leq  \b(x,y,\gamma)$ is clear from the previous property. For the reverse inequality assume $\b(x,y,\gamma) > 1 - \l$. By \eqref{morphi} this implies that $d_\l(x,y) < \gamma$  and hence $\exists s < \gamma,  \ \ d_\l(x,y) < s.$ This implies that $\exists s <\gamma,  \  \b(x,y,s) > 1 - \l.$ So we have that $\sup_{s < \gamma} \b(x,y,s) > 1 - \l.$
From the previous claims we can conclude that (P1) holds.

To see that (P2) holds,  observe that for $\gamma >0$  we have\\
 $$\b(x,x,\gamma) = \sup \{1 - \l \ | \ \varphi_{\l,x} (x) < \gamma\}  = 1.$$

Clearly (P3) follows immediately from {\bf(US)}.

Next we check (P4). Suppose $x \not = y,$  from {\bf (UH)} it follows that $$\exists \l, \ d_\l(x,y) \not = 0.$$
Choosing $\gamma$ such that $0 < \gamma < d_\l(x,y), $ from \eqref{morphi} we obtain $\b(x,y,\gamma) \not = 1.$

Next we check that $\b$ satisfies (P5). Let $x,y,z \in X,$ and assume $\b(y,z,\gamma') > 1 - \l'$ and $\b(x,y,\gamma) > 1 - \l.$  
For every $ 0 < \ve \leq 1 $ satisfying 
$$
1-\ve  < (1 - \l') * ( 1 - \l),
$$
applying {\bf(UT)},  
we have
$$
d_\ve(x,z) \leq d_\l(x,y) + d_{\l'}(y,z) < \gamma' + \gamma.
$$
Hence
$$
\b(x,z, \gamma + \gamma') > 1 - \ve. 
$$
This implies
$$
\b(x,z, \gamma + \gamma') \geq  (1 - \l') * ( 1 - \l).
$$
Applying \ref{star}
we can conclude that $\b(x,z, \gamma + \gamma') \geq \b(y,z, \gamma')  * \b(x,y, \gamma). $\\

That the transition indeed defines a functor goes as follows.\\
Let $f: (X, \{ d_\l \ | \ 0 < \l \leq 1 \}) \rw (X, \{ d'_\l \ | \ 0 < \l \leq 1 \})$ be levelwise non-expansive.
First observe that in case $\gamma = \infty,$  $\b(x,x',\infty) =  \b'(f(x), f(x'), \infty )= 1$ by definition.
 In the finite case, applying \eqref{morphi} we obtain the following equivalences:
  \begin{eqnarray*}\label{phifunctor}
  f:  (X, \{ d_\l \ | \l \}) \rw (X, \{ d'_\l \ | \l \}) \  \text{levelw. n-exp}.&\Leftrightarrow & \forall \l, \  d'_\l(f(x), f(x')) \leq d_\l (x,x')\\
&\Leftrightarrow& \forall \gamma,  \forall \l : \  d_\l(x,x') < \gamma \Rightarrow\\
&&  d'_\l(f(x), f(x')) < \gamma\\
&\Leftrightarrow& \forall \gamma, \forall \l : \ \ \b(x,x', \gamma) > 1 - \l  \Rightarrow\\
&& \b'(f(x), f(x') , \gamma) > 1 - \l )\\
&\Leftrightarrow&\b(x,x', -) \leq \b'(f(x), f(x') , -)\\
&\Leftrightarrow&\Phi(f) : (X, \b ) \rw (Y, \b' )\  \text{n-exp.} 
\end{eqnarray*}
\end{proof}
\end{proposition}

\begin{theorem}
The category $\PM$ is isomorphic to the category $\mathfrak{Z}$.
\begin{proof}
We prove that $ \Phi \circ \Delta =\mathsf{id}_{\PM} $ and $ \Delta \circ \Phi = \mathsf{id}_{\zz}.$

Given a probabilistic metric space $(X, \a, *),$ applying $\Delta$ we obtain an object  $(X, \{d_\l \ | \ 0 <\l \leq 1 \}$ in $\zz$. Next applying $\Phi$ we obtain a probabilistic metric space $(X, \b, *),$ as described in \ref{delta} and \ref{Phi}. Applying \eqref{isoequiv} and \eqref{morphi}  for $\gamma < \infty$ we get 
$$
\b(x,y,\gamma) > 1 - \l  \Leftrightarrow d_\l (x,y) < \gamma \Leftrightarrow \a(x,y,\gamma) > 1-\l.
$$
Since $\b(x,y,\infty) = \a(x,y,\infty)= 1$, we have $\a = \b.$ 

For the other identity, we start with an object  $(X, \{d_\l \ | \ 0 <\l \leq 1 \})$ of $\zz$ and apply $\Phi$. Let $(X, \b, *)$ be the probabilistic metric space obtained. Applying $\Delta$ we get the object $(X, \{e_\l \ | \ 0 <\l \leq 1 \})$ from $\zz.$
Again applying \eqref{isoequiv} and \eqref{morphi}, for $\gamma < \infty$ we get
$$
e_\l(x,y) < \gamma \Leftrightarrow \b(x,y, \gamma) > 1 - \l \Leftrightarrow d_\l (x,y)  < \gamma,
$$
which implies $e_\l = d_\l$ for all $\l.$
\end{proof}

\end{theorem}

In view of the isomorphism, we use the terminology \emph{probabilistic metric space} for both the objects of $\PM$ and the objects of $\mathfrak{Z}$ and  from now onwards we call a morphism in $\PM$  \emph {non-expansive}.

\section{the category $\PM$.}

In this section we  show that the category $\PM$ of probabilistic metric spaces and non-expansive maps is Monotopological over $\mathsf{Set}.$ 
We recall from \cite{AHS} that a category, concrete over $\mathsf{Set}$ is \emph{monotopological }over $\mathsf{Set}$ provided that every structured point-separating source has a unique initial lift. 

\begin{theorem}\label{mono}
The category $\PM$ is monotopological and well fibred over $\mathsf{Set}.$
\begin{proof}
Let $(f_i: X \rw (X_i, (d^i_\l)_\l))_{i \in I}$ be an arbitrary pointseparating source with $(X_i, (d^i_\l)_\l))$ a probabilistic metric space for every $i \in I$.
On $X$ for $0< \xi \leq 1$ we put 
\begin{equation}\label{sup}
e_\xi (x,y)= \sup _{i \in I} d^i_\xi (f_i(x), f_i(y))
\end{equation}
and further for $0< \l \leq 1$
\begin{equation}\label{inf}
d_\l(x,y) = \inf_{\xi < \l} e_\xi(x,y) =  \inf_{\xi < \l} \sup _{i \in I} d^i_\xi (f_i(x), f_i(y)).
\end{equation}
We show that $(X,(d_\l)_\l)$ is a probabilistic metric space.
Clearly for every $\l$ $d_\l(x,x) = 0$ for $x \in X$ and $d_\l(x,y) = d_\l(y,x)$ for $x,y \in X.$

Clearly {\bf(UD)} holds by
$$
\inf_{\mu < \l} \inf_{\xi < \mu} \sup_{i \in I} d^i_\xi (f_i(x) , f_i(y) )= \inf_{\xi < \l} \sup_{i \in I} d^i_\xi (f_i(x) , f_i(y) ) = d_\l(x,y).
$$
In order to show that  for $(d_\l)_\l$  {\bf(UT)} is fulfilled, as a first step we prove {\bf(UT)} for $(e_\l)_\l.$
Let $\xi, \rho, \rho'$ be such that $(1-\xi )< (1 - \rho') * (1 - \rho)$ and $x,y,z \in X$ arbitrary.
For all $i \in I$ we have 
$$
d^i_\xi(f_i(x), f_i(z)) \leq d^i_\rho(f_i(x), f_i(y)) +d^i_{\rho'}(f_i(y), f_i(z)). 
$$
Hence for all $i \in I,$ 
$
d^i_\xi(f_i(x), f_i(z)) \leq e_\rho(x,y) +e_{\rho'} (y,z),
$
and finally $e_\xi (x,z) \leq e_\rho(x,y) +e_{\rho'} (y,z).$

Next we check {\bf(UT)} for $(d_\l)_\l.$ Let $\ve, \l, \l'$ be such that $(1-\ve )< (1 - \l') * (1 - \l)$ and $x,y,z \in X$ arbitrary.
Suppose $d_\l(x,y) < \gamma$ and $d_{\l'}(y,z) < \gamma'.$ 
It follows that we can choose $\rho < \l$ and $\rho' < \l'$ such that 
$$
e_\rho (x,y) < \gamma, \ \  e_{\rho'} (y,z) < \gamma'.
$$
Next we can determine $\xi < \ve $ such that 
$$
(1- \ve) < (1 - \xi) < (1 - \l') * (1 - \l) \leq (1 - \rho') * (1 - \rho).
$$
In view of the first step we have $e_\xi (x,z) \leq e_\rho(x,y) +e_{\rho'} (y,z) < \gamma + \gamma'.$
Finally we have
$$
d_\ve (x,z) = \inf_{\ve' < \ve} e_{\ve'} (x,z) \leq e_\xi (x,z) < \gamma + \gamma'.
$$

Next we check {\bf(UH)} for $(X, d_\l)_\l).$ Let $x \not = y,$ then there exists $i \in I$ with $f_i(x) \not = f_i (y)$ and
$\rho$ such that $d^i_\rho (f_i(x) , f_i(y) ) \not = 0.$ It follows that $e_\rho (x,y) \not = 0.$
Remark that from the definition of $e_\rho$ in \eqref{sup} and  {\bf(UD)} for $(X_i, (d^i_\l)_\l)$ it follows that $\mu < \rho$ implies $e_\rho(x,y) < e_\mu (x,y).$
So we have
$$
0\not = e_\rho (x,y) \leq \inf_{\mu < \rho }e_\mu(x,y) = d_\rho (x,y).
$$
Finally we prove that the source 
$$
(f_i: (X, (d_\l )_\l)\rw (X_i, (d^i_\l)_\l))_{i \in I}
$$
is initial. Clearly for $x,y \in X$ in view of {\bf(UD)}  for every $k \in I$  and $\l$ we have
$$
d^k_\l(f_k(x) , f_k (y)) = \inf_{\xi < \l} d^k_\xi(f_k(x) , f_k (y)) \leq  \inf_{\xi < \l} \sup_{i \in I}d^i_\xi(f_k(x) , f_k (y))  = d_\l (x,y), 
$$
and hence all maps $f_i$ are non-expansive.

Next let $h: (Z, (g_\l)_\l) \rw (X, (d_\l )_\l)$ be a function and assume that $f_i \circ h$ is non-expansive on a probabilistic metric space $(Z, (g_\l)_\l).$ 
This means that for all $i \in I$ and for all $\xi$
$$ d^i_\xi (f_i(h(z)), f_i(h(z')) \leq g_\xi(z,z').$$
It follows that with the notation of \eqref{sup} $e_\xi(h(z), h(z')) \leq g_\xi(z,z')$ for all $\xi.$
Applying \eqref{inf} and {\bf(UD)} for $(Z, (g_\l)_\l)$   for $\l$ arbitrary we obtain
$$
d_\l (h(z), h(z')) = \inf_{\xi < \l} e_\xi(h(z), h(z')) \leq  \inf_{\xi < \l} g_\xi(z,z')= g_\l(z,z').
$$
\end{proof}
\end{theorem}

In a similar way as in \ref{Z} and \ref{mono} for the category $p\PM$ of pseudo probabilistic metric spaces with non-expansive maps, obtained by dropping {\bf(UH)} or equivalently (P4),
we obtain, 
\begin{proposition}\label{topolog}
The category $p\PM$ can be isomorphically described as a category with objects $(X, \{d_\l \ | \ 0< \l \leq 1\}),$ sets endowed with a collection of distances numbered by $ 0< \l \leq 1,$ where $\{d_\l \ | \ 0< \l \leq 1\}$
satisfies the following conditions: {\bf(US)}, {\bf(UD)}, {\bf(UT)}  for some continuous t-norm $*$. 

$p\PM$ is topological over $\mathsf{Set} $ and is well-fibred  \cite{AHS}. It is an (Extremal epi, Mono) and cowell-powered category.
\end{proposition}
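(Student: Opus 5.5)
We follow the pattern of Section 3 and Theorem \ref{mono}, dropping {\bf(UH)} and (P4) throughout.

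\textbf{Isomorphic description.} We rerun Propositions \ref{delta} and \ref{Phi} and the subsequent isomorphism theorem without (P4) and {\bf(UH)}. None of the arguments for (US), (UD), (UT), for (P1)--(P3), (P5), for functoriality, or for $\Phi\circ\Delta=\mathsf{id}$ and $\Delta\circ\Phi=\mathsf{id}$ used separation. The equivalences \eqref{isoequiv} and \eqref{morphi} only use left continuity, monotonicity and {\bf(UD)}. Lemma \ref{star} is pure t-norm arithmetic. The restricted functors $\Delta$ and $\Phi$ are therefore mutually inverse concrete functors between $p\PM$ and the category of sets carrying families satisfying {\bf(US)}, {\bf(UD)} and {\bf(UT)}.

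\textbf{Topologicity.} Take an arbitrary source $(f_i: X\rw (X_i,(d^i_\l)_\l))_{i\in I}$, now not necessarily point-separating. We define $e_\xi$ by \eqref{sup} and $d_\l$ by \eqref{inf}, exactly as in Theorem \ref{mono}.
\begin{itemize}
\item The verification of {\bf(US)}, {\bf(UD)} and {\bf(UT)} is identical, since it never used point separation.
\item The initiality argument is identical.
\item Uniqueness of the initial lift is automatic here, because distinct structures in a fibre can be compared: the lift is the coarsest structure making all $f_i$ non-expansive.
\end{itemize}
The empty source gives the indiscrete structure $d_\l\equiv 0$, which satisfies all three axioms. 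Well-fibredness is immediate: each fibre is a set, being a set of families of functions on $X\times X$. Fibre-smallness and the uniqueness of structures on sets with at most one point also hold. On $\emptyset$ or a singleton, every distance vanishes on the diagonal, so there is exactly one structure.

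\textbf{Factorization and cowell-poweredness.} We invoke the standard results for topological categories in \cite{AHS}.
\begin{itemize}
\item A topological category over $\mathsf{Set}$ inherits the (Epi, Mono) and (Extremal epi, Mono) factorization structures from $\mathsf{Set}$: factor $f$ through its image and give the image the final structure.
\item Monos are the injective maps, since the forgetful functor preserves and reflects monos: it has a left adjoint given by the indiscrete/discrete structures.
\item Epis are the surjective maps. For an epi $f$, compose with two maps into an indiscrete space; because indiscrete spaces are initial, any two maps into them are non-expansive, and surjectivity follows as in $\mathsf{Set}$.
\item Extremal epis are the final surjections.
\end{itemize}
Cowell-poweredness then follows because an epi out of $X$ is determined, up to isomorphism, by a surjection out of $X$ together with a structure on its codomain. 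Both range over sets.

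The only step needing care is the factorization/epi characterization. We handle it with the indiscrete-space argument above and otherwise cite \cite{AHS}.
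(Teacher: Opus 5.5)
Your proposal is correct and follows essentially the paper's route. The paper justifies this proposition only by saying the isomorphism of Section 3 and the initial-lift construction of Theorem \ref{mono} go through once \textbf{(UH)} and (P4) are dropped, and it relies on the standard facts about topological categories over $\mathsf{Set}$ for the rest; you carry this out in more detail. One aside should be corrected, although the statement does not need it: (Epi, Mono) is not a factorization structure on $p\PM$, because the identity from the discrete structure ($d_\l(x,y)=\infty$ for $x\neq y$) to the indiscrete one is a bimorphism that is not an isomorphism, and a factorization structure $(E,M)$ always has $E\cap M$ equal to the isomorphisms; what $p\PM$ inherits are (Extremal epi, Mono), which is what the statement asks for, and (Epi, Extremal mono).
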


\begin{proposition}
The category $\PM$ is extremally epireflective in $p\PM.$
\begin{proof}
In view of the previous proposition it is sufficient to show that $\PM$ is closed under mono subobjects and products in $p\PM.$

First let $f: (X,(d_\l)_\l) \rw (Z, (e_\l)_\l)$ be an injective non-expansive map, with $(X,(d_\l)_\l)$ a pseudo probabilistic metric and $(Z, (e_\l)_\l$ a probabilistic metric space. We check {\bf(UH)} for $(X,(d_\l)_\l).$
Let $x \not = y$ in $X$, then we have $f(x) \not = f(y).$ There exists a $\l$ such that $e_\l (f(x), f(y)) > 0.$ Then we have
$$
0< e_\l (f(x), f(y)) \leq d_\l (x,y).
$$
Next we consider a product in $p\PM$
$$(pr_i: (\prod{X_i}, (d_\l )_\l)\rw (X_i, (d^i_\l)_\l))_{i \in I}$$
 with $(X_i, (d^i_\l)_\l))$ a probabilistic metric space for every $i \in I$. Then this source is pointseparating. The explicit formula of $d_\l$ is similar to \eqref{inf} with $f_i = pr_i$ for $i \in I.$ The proof of {\bf(UH)} for $(\prod{X_i}, (d_\l)_\l))$ is similar to the one for the case of 
{\bf(UH)} in \ref{mono}.
\end{proof}
\end{proposition}

Since $\PM$ is monotopological we have the following results \cite{Nel}, \cite{AHS}.
\begin{proposition}
For any non-expansive map $f$ in $\PM$ the following are equivalent:
\begin{enumerate}
\item $f$ is a quotient
\item $f$ is a regular epi
\item $f$ is an extremal epi.
\end{enumerate}
The category $\PM$ is an $(extremal \ epi, mono)$-category.
Since in $\PM$ the monomorphisms are the injective non-expansive maps, $\PM$ is well-powered.
\end{proposition}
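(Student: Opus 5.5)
The plan is to use the general theory of monotopological categories from \cite{AHS} and \cite{Nel}, together with Theorem \ref{mono}, rather than computing everything by hand. Theorem \ref{mono} gives that $\PM$ is monotopological and well fibred over $\mathsf{Set}$. In particular, every point-separating structured source has a unique initial lift.

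First I describe the monomorphisms. Injective non-expansive maps are clearly monic. For the converse, let $f$ be monic in $\PM$ and suppose $f(x)=f(y)$. Take the one-point space $P$, which is a probabilistic metric space with $d_\l=0$ for all $\l$. The constant maps $P\rw X$ with values $x$ and $y$ are non-expansive, because every distance vanishes on the diagonal. Composing each with $f$ gives the same map, so $x=y$. Thus the monos are exactly the injective non-expansive maps.

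Next come the equivalences (1)$\Leftrightarrow$(2)$\Leftrightarrow$(3). Here I invoke the standard result for monotopological categories: a monotopological category is dual to one admitting unique final lifts of appropriate sinks. It is therefore cocomplete and has (epi, initial mono-source) factorizations, and in it quotient maps, regular epis and extremal epis coincide \cite{AHS}. The direct argument would go as follows.

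A quotient in $\PM$ means a surjective final morphism. Every such map is a coequalizer of its kernel pair. To see this, form the kernel pair $K=\{(x,y)\mid f(x)=f(y)\}$ with the initial structure from the two projections, using Theorem \ref{mono}; finality then gives the coequalizer property. So quotient implies regular epi.

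Regular epi implies extremal epi holds in any category.

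For extremal epi implies quotient, take an extremal epi $f:X\rw Y$. Factor it through the final structure on the image $f[X]$, after passing to the separated reflection. Since $\PM$ is extremally epireflective in the topological category $p\PM$, the image factorization in $p\PM$ can be reflected. The resulting comparison map is an injective non-expansive map, hence a mono, so extremality forces it to be an isomorphism. Hence $f$ is final and surjective.

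This last step is the main obstacle. Surjectivity and finality must be recovered in $\PM$ rather than $p\PM$, so one must be careful that the reflection used to separate points does not destroy surjectivity. I would handle this by relying directly on the \cite{AHS} theorem for monotopological categories.

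The $(\text{extremal epi},\text{mono})$-factorization then follows from the same theorem, using cocompleteness and the fact that the category is well fibred.

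Finally, well-poweredness follows from two facts. Monos are injective maps, and $\PM$ is well fibred, so for each set there is only a set of structures on it. Hence the subobjects of any $X$ form a set, indexed up to isomorphism by the subsets of $X$ together with the structures on each subset.
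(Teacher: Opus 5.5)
Your proposal is correct and takes the same route as the paper, whose proof consists only of invoking the general results on monotopological categories (Nel; Ad\'amek--Herrlich--Strecker) via Theorem \ref{mono}. Your supplementary direct checks are also sound: monos are injective (via the one-point space), quotient $\Rightarrow$ regular epi (via the kernel pair with its initial structure), and well-poweredness follows from well-fibredness. The obstacle you flag in extremal epi $\Rightarrow$ quotient is not real: the final $p\PM$-structure on $f[X]$ already satisfies \textbf{(UH)}, because the inclusion $f[X]\rw Y$ is an injective non-expansive map into a $\PM$-object (closure under mono-subobjects, as in the paper's epireflectivity proposition). So no reflection is needed, and extremality directly makes this inclusion an isomorphism, which shows that $f$ is surjective and final.
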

The characterization of the epimorphisms and extremal monomorphisms and the study of cowell-poweredness will be treated in the next section.

\section{the regular closure related to $\PM$}
Given $(X, (d_\l)_\l),$ \emph{the strong uniformity} \cite {SS} $\mathcal{U}_0$ on $X$ is generated by $$\{ U_\l^\gamma | 0 < \gamma, 0< \l \leq 1 \},$$  where $$U_\l^\gamma = \{ (x,y) | d_\l (x,y) < \gamma\}.$$ In terms of the $\mathsf{UG}$  space on $X$ \cite{CL2}, generated by $(X, (d_\l)_\l),$ the uniform space $(X,\uuu_0)$ is the uniform coreflection. The \emph{strong topology} $\mathcal{T}_0$ on $X$ is the underlying topology of $\mathcal{U}_0.$ Clearly a non-expansive map between probabilistic metric spaces is uniformly continuous for the associated strong uniformities and continuous for the associated strong topologies.

In this section we show that the objects in $\PM$ coincide with the $T_0$-objects in $p\PM.$
Further we prove that they are also characterized by the $T_0$-property in the strong topology.

As $\PM$ is an (extremal) epireflective subcategory of the topological category $p\PM,$ it induces a regular closure operator \cite{DG}, \cite{DT}, \cite{DGT}. We prove that the regular closure is equal to the closure in the strong topology and we deduce characterizations of the epimorphisms and the regular monomorphisms. We show that $\PM$ is a cowell-powered $(\mathcal{E}, \mathcal{M})$-category for $\mathcal{E}$ the epis and $\mathcal{M}$ the regular monos.\\

We refer to the usual notion of a $T_0$-object in a topological category \cite{M} and apply it to $p\PM.$ 
$(X, (d_\l)_\l)$ is a \emph{$T_0$-object} if and only if every non-expansive map from $I_2,$ the indiscrete pseudo probabilistic metric space with two points (with only one distance $d=0$), to $(X,( d_\l)_\l)$ is constant.

\begin{proposition}
The following are equivalent for $(X, (d_\l)_\l)$ in $p\PM$
\begin {enumerate}
\item $(X, (d_\l)_\l)$ is a $T_0$-object
\item $(X, (d_\l)_\l)$ satisfies {\bf(UH)}
\item The induced strong uniformity is $T_0$
\item The induced strong topology is $T_0$
\end{enumerate}
\begin{proof}
(1) $\Rightarrow$ (2) Assume $x \not = y$ and define $f: I_2 \rw (X, d_\l)_\l)$ defined by $f(0)= x$ and $f(1) = y.$ By (1) this function is not a morphism. Hence there exists $\l$ with $d_\l (x,y) \not = 0.$\\
(2) $\Rightarrow$ (3) Let $x \not = y$ and $\l$ with $d_\l (x,y) \not = 0.$ With $\gamma$ satisfying $0 < \gamma < d_\l(x,y)$ we clearly have $(x,y) \not \in U_\l^\gamma$.\\
(3) $\Leftrightarrow$ (4) is evident.\\
(3) $\Rightarrow$ (1) Suppose $f: I_2 \rw (X, (d_\l)_\l)$ is non-expansive but not constant, $f(0) \not = f(1)$ in $X$. By non-expansiveness of $f,$ for every $\l$ we have $d_\l(f(0), f(1)) \leq d(0,1) = 0$. It follows that $(f(0), f(1)) \in U_\l^\gamma$ for all $\l$ and $\gamma.$
\end{proof}
\end{proposition}

Next we consider the following closure operator on $\PM$.
For a probabilistic metric space $(X, (d_\l)_\l)$ consider the closure $\cl: 2^X \rw 2^X$ mapping $A \subseteq X$ to the closure in the strong topology. As it is a topological closure operator and non-expansive maps are continuous, $\cl$ satisfies all properties listed in \cite{DT}, extension, monotonicity, continuity, it is idempotent, hereditary, grounded and additive.

We first give some equivalent descriptions. Let $(Y, (d_\l)_\l)$ a probabilistic metric space, $A \subseteq Y$ and $y \in Y$ and $0< \l \leq 1$, then we denote
$$
d_\l(y, A) = \inf_{a\in A} d_\l(y,a).
$$

\begin{proposition}\label{equivtop}
Let $(Y, (d_\l)_\l)$ a probabilistic metric space, $A \subseteq Y$ and $y \in Y$.
The following are equivalent:
\begin{enumerate}
\item $y \in \cl(A)$
\item $\forall\  0< \l \leq 1, \forall  \ 0 < \gamma < \infty, \ d_\l(y,A) < \gamma$
\item $\forall\  0< \l \leq 1, d_\l(y,A) =0$
\item $\forall \ 0< \rho \leq 1, \  \ d_\rho(y,A) < \rho.$
\end{enumerate}
\begin{proof}
(1) $\Rightarrow$ (2) Let $0< \l \leq 1$ and $0< \gamma < \infty,$ consider the neighborhood of $y$ in the strong topology 
$V_\l^\gamma = \{ x \ | \ d_\l(x,y) < \gamma \}$ and choose $a \in A \cap V_\l^\gamma.$ Then clearly $d_\l(y,a) < \gamma.$\\
(2) $\Leftrightarrow$ (3) is clear.\\
(2) $\Rightarrow$ (4) Let $ 0< \rho \leq 1$. With $\l = \gamma = \rho$ the result follows.\\
(4) $\Rightarrow$ (1) Let  $0< \l \leq 1,  \ 0 < \gamma < \infty.$ Choose $0 < \rho < \l \wedge \gamma.$ By (4) and {\bf(UD)} we have $d_\l (y ,A) \leq d_\rho(y,A) < \rho <\gamma.$ So there exists $a \in A$ satisfying $a \in V_\l^\gamma.$

\end{proof}
\end{proposition}

Recall that for $(Y, (d_\l)_\l)$ a probabilistic metric space, $A \subseteq X$ the \emph{regular closure} of $A$ is defined by
$$
\text{reg}(A) = \{ y \in Y \ | \ \forall  u,v : Y \rw Z, \  u_{|_A} = v_{|_A} \Rightarrow u(y) = v(y) \}
$$
with $u,v$ non-expansive and $ Z \in \PM.$

Next we prove that the two closures coincide.

\begin{theorem}\label{iso}
For $(Y, (d_\l)_\l)$ a probabilistic metric space, $A \subseteq X$ we have 
$$
\cl(A) = \text{reg}(A).
$$

\begin{proof}
First let $y \in \cl(A)$ and assume $u,v : (Y, (d_\l)_\l) \rw (Z, (e_\l)_\l)$ are non-expansive with $(Z, (e_\l)_\l) \in \PM$ with $u_{|_A} = v_{|_A}.$ Suppose $u(y) \not = v(y)$ then in view of {\bf(UH)} in $(Z, (e_\l)_\l),$ $\exists \  0 < \ve \leq 1$ such that $e_\ve(u(y) , v(y)) \not = 0.$
Choose $0 < \l \leq 1$ satisfying $$2 \l < e_\ve (u(y) , v(y)) \  \text{and } \ (1-\ve) < (1-\l) * (1-\l).$$
Applying (4) from \ref{equivtop} there exists $a \in A$ satisfying $d_\l(y,a) < \l$. Applying {\bf(UT)} we get a contradiction with the choice of $\l$.
 \begin{eqnarray*}
 e_\ve(u(y),v(y)) &\leq & e_\l(u(y), u(a)) + e_\l(u(a), v(y))\\
 &=&e_\l(u(y), u(a)) + e_\l(v(a), v(y))\\
 &\leq& d_\l(y,a) + d_\l(a,y)\\
 &=&2d_\l(y,a)\\
 &<& 2\l.\\
 \end{eqnarray*}
 
 Next we show the other inclusion. Assume $Y \setminus \cl{A} \not = \emptyset.$ 
 We define
 $$
 Z = (Y \setminus \cl (A)) \cup \{0\} 
 $$
 and $(e_\l)_\l $ on $Z$ defined as follows.
For $0 < \l \leq 1,$ $y,z \in Y \setminus \cl(A)$
 $$
  \begin{cases}  e_\l(y, z) = d_\l(y,z) \\
 e_\l(0,y) =  e_\l(y, 0) =d_\l(y,A)&\\ 
 e_\l(0,0) = 0. \end{cases}
$$
We check that $(Z, (e_\l)_\l)$ is a probabilistic metric space. Clearly for every $\l$ the distance $e_\l$ satisfies {\bf(US)}.
Also {\bf(UD)} is clear. The first case is $(y, z)$ with $y,z \in Y \setminus \cl(A).$
That $e_\l (y, z) = \inf_{\mu < \l} e_\mu(y, z) $ follows at once from the {\bf(UD)} property of $(Y, (d_\l)_\l).$ The second case is $(y, 0)$ with $y \in Y \setminus \cl(A).$ In this case we have
$$e_\l(y, 0) = d_\l(y,A)= \inf_{a \in A}d_\l(y,a) = \inf_{a \in A} \inf_{\mu < \l} d_\mu (y,a) = \inf_{\mu < \l} d_\mu(y,A)=  \inf_{\mu < \l} e_\mu(y, 0).$$
Next we check {\bf(UT)}. Consider $\ve, \l, \l'$ in $]0,1]$ satisfying $(1-\ve) < (1-\l') * (1-\l).$
For $y,z,x \in Y \setminus \cl(A)$ we have 
$$
e_\ve(y,z) = d_\ve(y,z) \leq d_\l(y,x) +d_{\l'}(x,z) = e_\l(y,x) +e_{\l'}(x,z).
$$
For $y,z \in Y \setminus \cl(A)$ and $0$ we have
$$
e_\ve(y,z) = d_\ve(y,z) \leq d_\l(y,a) +d_{\l'}(a,z) 
$$
whenever $a \in A.$ Hence 
$$
e_\ve(y,z)  \leq d_\l(y,A) +d_{\l'}(z,A) = e_\l(y,0) +e_{\l'}(0,z).
$$
Finally for $y\in Y \setminus \cl(A),$ $0$ and $z\in Y \setminus \cl(A),$  we have $d_\ve(y,a) \leq d_\l(y,z) + d_{\l'} (z,a)$ for all $a \in A.$ This implies
$$
e_\ve(y,0) = d_\ve(y,A) \leq e_\l(y,z) + d_{\l'} (z,A) = e_\l(y,z) + e_{\l'} (z,0).
$$
To check the property {\bf(UH)} let $y \not =z \in Y \setminus \cl(A)$. Take $\l$ such that $d_\l(y,z) \not = 0$ then clearly $e_\l(y,z) \not = 0.$
Let $y \in Y \setminus \cl(A).$ By (3) in \ref{equivtop} we have $d_\l(y,A) \not = 0.$ This implies $e_\l(y,0) \not = 0.$

Consider the following pair of maps $u,v: (Y, (d_\l)_\l) \rw (Z, (e_\l)_\l),$ with $v$ identically zero and $u$ defined by
$$
  \begin{cases}  u(y) = y \ \  \text{on} \ \ Y \setminus \cl(A) \\
 u(a) = 0  \  \ \text{on} \  \  \cl(A).  \end{cases}
$$
We show that $u$ is non-expansive (it is even a quotient in $\PM$). Let $\l $ be arbitrary. For $z,y \in Y \setminus \cl(A)$ we have $$e_\l(u(z), u(y)) = e_\l(z,y) = d_\l(y,z).$$\\
For $z \in Y \setminus \cl(A), a \in \cl(A)$ we have $$e_\l(u(z), u(a)) = e_\l(z,0) = d_\l(z, A) \leq d_\l(z,a).$$
Finally for $a  , a'  \in \cl(A)$ we have $$e_\l(u(a), u(a')) = 0 \leq d_\l(a,a').$$ 
So we can conclude that $u$ and $v$ coincide on $\cl{A}$ but  $u(y) \not = v(y)$ for $y \in Y \setminus \cl(A).$ 

\end{proof}
\end{theorem}

We immediately have the following corollaries \cite{DGT}, \cite{DT}, \cite{CGT}.

\begin{corollary}\label{epi}
A non-expansive map in $\PM$  $f: (X,(d_\l)_\l) \rw (Y,(e_\l)_\l)$ is an epimorphism in $\PM$ if and only if it is a dense map, meaning $\cl (f(X)) = Y.$
\end{corollary}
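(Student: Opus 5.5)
The plan is to reduce both directions to Theorem \ref{iso} via the standard definition of the regular closure. Throughout I write $A = f(X)$ and view it as a subset of $(Y,(e_\l)_\l)$.

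\textbf{Dense implies epi.} Suppose $\cl(f(X)) = Y$, and let $u,v : (Y,(e_\l)_\l) \rw (Z,(g_\l)_\l)$ be non-expansive maps into some $Z \in \PM$ with $u \circ f = v \circ f$.
\begin{itemize}
\item The equality $u \circ f = v \circ f$ says exactly that $u_{|_{f(X)}} = v_{|_{f(X)}}$.
\item By Theorem \ref{iso}, every $y \in Y = \cl(f(X))$ lies in $\text{reg}(f(X))$.
\item By the definition of the regular closure, this gives $u(y) = v(y)$.
\end{itemize}
Hence $u = v$, so $f$ is an epimorphism.

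\textbf{Epi implies dense.} Suppose $f$ is an epimorphism, and assume for contradiction that $Y \setminus \cl(f(X)) \neq \emptyset$. By Theorem \ref{iso}, $\cl(f(X)) = \text{reg}(f(X))$. So there is a point $y \notin \text{reg}(f(X))$, together with non-expansive maps $u,v : Y \rw Z$, $Z \in \PM$, such that:
\begin{itemize}
\item $u$ and $v$ agree on $f(X)$, that is, $u \circ f = v \circ f$;
\item $u(y) \neq v(y)$.
\end{itemize}
This contradicts $f$ being an epimorphism. Hence $\cl(f(X)) = Y$.

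If one prefers to avoid the abstract definition, the pair constructed in the second half of the proof of Theorem \ref{iso} can be used directly. Take $A = f(X)$, $Z = (Y \setminus \cl(A)) \cup \{0\}$, $v \equiv 0$, and $u$ equal to the identity off $\cl(A)$ and $0$ on $\cl(A)$. These maps are non-expansive and agree on $f(X)$, so $u \circ f = v \circ f$. They differ at every point outside $\cl(A)$, so they are distinct maps, which again contradicts epimorphy.

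There is no real obstacle; the main point is to check that Theorem \ref{iso} applies to an arbitrary subset $A = f(X) \subseteq Y$, which it does.
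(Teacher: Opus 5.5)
Your main argument is correct and follows the paper's route. The paper states this corollary as an immediate consequence of Theorem~\ref{iso} plus the standard fact that epimorphisms are exactly the $\text{reg}$-dense maps, and your two directions unpack precisely that fact.

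Your optional last paragraph, however, should be dropped or repaired. The space $Z=(Y\setminus\cl(A))\cup\{0\}$ from the proof of Theorem~\ref{iso}, which keeps $e_\l=d_\l$ off $\cl(A)$, is in general not an object of $\PM$.

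\emph{Counterexample.} Take $Y=[1,9]$ with $d_\l(x,y)=|x-y|$ for every $\l$, and the closed set $A=\{1,9\}$. Then $e_\l(2,8)=6$, while $e_\l(2,0)+e_\l(0,8)=1+1=2$. So \textbf{(UT)} fails, for instance with $\ve=1$ and $\l=\l'$ small.

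\emph{Where it breaks.} The faulty step in the paper passes from $d_\ve(y,z)\le d_\l(y,a)+d_{\l'}(a,z)$ for each single $a\in A$ to $d_\ve(y,z)\le d_\l(y,A)+d_{\l'}(z,A)$. The right-hand side is an infimum over two independent points of $A$, so it does not follow. Hence your claim that $u,v$ are non-expansive maps into an object of $\PM$ is unjustified.

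\emph{A construction that works.} Use the cokernel-pair (doubling) construction. Glue two copies of $Y$ along $C=\cl(A)$ and keep $d_\l$ on each copy. For $x_1,x'_2$ on different copies put
$e_\ve(x_1,x'_2)=\inf\{d_\l(x,c)+d_{\l'}(c,x') \mid c\in C,\ (1-\l')*(1-\l)>1-\ve\}$.
Using \textbf{(UT)} and \textbf{(UD)} in $Y$, together with associativity and continuity of $*$, one checks that this is an object of $\PM$. For \textbf{(UH)} at $x_1\neq x_2$ with $x\notin C$, note that $e_\ve(x_1,x_2)\ge d_\ve(x,C)$, which is $>0$ for suitable $\ve$. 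The two canonical injections agree on $C$ and differ at every point outside $C$.

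This shows that the statement of Theorem~\ref{iso} is true, even though its printed proof has this gap. Since your main argument uses that theorem only as a black box, your main proof stands.
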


Since the closure operator $\cl$ on $\PM$ is (weakly) hereditary it also enables us to describe the regular mono's.

\begin{corollary}
The following classes of non-expansive maps in $\PM$ coincide:
\begin{enumerate}
\item The class of all regular monomorphisms
\item The class of all extremal monomorphisms
\item The class of all $\cl$-closed embeddings
\end{enumerate}
\end{corollary}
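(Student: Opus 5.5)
We need to show that for a non-expansive map $m:(X,(d_\l)_\l)\rw (Y,(e_\l)_\l)$ in $\PM$ the following are equivalent: (1) $m$ is a regular monomorphism; (2) $m$ is an extremal monomorphism; (3) $m$ is an embedding, i.e.\ initial and injective, with $\cl(m(X))=m(X)$. We prove the cycle (1) $\Rightarrow$ (2) $\Rightarrow$ (3) $\Rightarrow$ (1). The first implication holds in any category.

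For (2) $\Rightarrow$ (3), let $m$ be an extremal mono. Monos in $\PM$ are the injective non-expansive maps, so $m$ is injective. For initiality, we equip $X$ with the initial structure $(d'_\l)_\l$ induced by the single point-separating map $m$, which exists by \ref{mono}. Then $m$ factors as $\mathrm{id}:(X,(d_\l)_\l)\rw (X,(d'_\l)_\l)$ followed by $m$. The identity is a bijection, hence an epi, so extremality forces it to be an isomorphism. Therefore $(d_\l)=(d'_\l)$ and $m$ is initial. For closedness, let $C=\cl(m(X))$ carry the initial (subspace) structure, which lies in $\PM$ by \ref{mono}. Then $m$ factors as $X\rw C$ followed by the inclusion $C\hookrightarrow Y$. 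Since $\cl$ is hereditary, the closure of $m(X)$ in $C$ is $C$ itself, so $X\rw C$ is dense. By Corollary \ref{epi} it is therefore an epimorphism, and extremality makes it an isomorphism. Hence $m(X)=C$ is closed.

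For (3) $\Rightarrow$ (1), let $m$ be an embedding with $A=m(X)$ closed; we may assume $A\neq Y$, since otherwise $m$ is an isomorphism. The maps $u,v:Y\rw Z$ constructed in the proof of Theorem \ref{iso} agree exactly on $\cl(A)=A$: $v\equiv 0$, and $u(y)=y\ne 0$ off $\cl(A)$. We claim $m$ is the equalizer of $u$ and $v$. Let $h:(W,(g_\l)_\l)\rw Y$ satisfy $u\circ h=v\circ h$. Then $h(W)\subseteq A$, so $h=m\circ k$ for a unique function $k$, by injectivity of $m$. Initiality of $m$ makes $k$ non-expansive, and uniqueness of $k$ follows from $m$ being mono. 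Thus $m$ is regular.

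The main obstacle is the identification in (2) $\Rightarrow$ (3) of the dense part of the factorization as an epimorphism. This rests on Corollary \ref{epi} together with heredity of $\cl$: we need that closure in the subspace $C$ is the trace of the closure in $Y$. That holds because the strong topology of an initial structure is the initial topology. The latter fact follows from formula \eqref{inf} and {\bf(UD)}, because the basic entourages $U^\gamma_\l$ for $d$ are traces of those for $e$.
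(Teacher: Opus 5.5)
Your proof is correct, but it follows a different route from the paper. The paper gives no argument of its own. It only notes that $\cl$ is the regular closure operator of the epireflective subcategory $\PM$ of the topological category $p\PM$ (Theorem \ref{iso}), and that $\cl$ is idempotent and (weakly) hereditary. It then appeals to the general theory of closure operators, where regular monos are the regular-closed embeddings and idempotence plus weak heredity make them coincide with the extremal monos.

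You instead close the cycle by hand, using only what the paper has established:
\begin{enumerate}
\item initial lifts (Theorem \ref{mono}) give initiality of an extremal mono;
\item the implication ``dense $\Rightarrow$ epi'' of Corollary \ref{epi}, applied to the corestriction $X\rw \cl(m(X))$, gives closedness of the image;
\item for (3) $\Rightarrow$ (1), you reuse the separating pair $u,v\colon Y\rw Z$ from the proof of Theorem \ref{iso}, whose coincidence set is exactly $\cl(A)=A$.
\end{enumerate}
What you really use is weak heredity, namely density of $m(X)$ in $\cl(m(X))$ with its subspace structure. That is exactly the point where the paper invokes it. Your observation that the initial structure along an injection is just the restriction of $(e_\l)_\l$, by \eqref{inf} and {\bf(UD)}, is the right justification for it.

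Your version buys self-containedness and a concrete bonus: each closed embedding is the equalizer of a single explicit pair, into the space obtained from $Y$ by collapsing $\cl(A)$ to a point. The paper's citation is shorter and places the result in the general framework, which simultaneously yields the $(\eee^{\cl},\mmm^{\cl})$-factorization of the next corollary.
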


Since the closure operator $\cl$ is idempotent and (weakly) hereditary we have the following.
\begin{corollary}
$\PM$ is an $(\eee^{\cl}, \mmm^{\cl})$ category, with $\eee^{\cl}$ the dense maps and $ \mmm^{\cl}$ the closed embeddings
\end{corollary}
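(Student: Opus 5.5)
The corollary should follow from the general theory of closure operators in \cite{DT}, \cite{DGT}: a category with a suitable factorization of morphisms, equipped with an idempotent and weakly hereditary closure operator $c$, is an $(\eee^{c},\mmm^{c})$-category. Here $\eee^{c}$ consists of the $c$-dense morphisms and $\mmm^{c}$ of the $c$-closed embeddings (more precisely, the $c$-closed $\mmm$-morphisms). So the plan is to check the hypotheses for $\cl$ on $\PM$ and then confirm that the factorization is the concrete one we expect.

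First, the ambient setting. By Theorem \ref{mono}, $\PM$ is monotopological. By the preceding proposition it is an (extremal epi, mono)-category. The regular monomorphisms are exactly the closed embeddings by the previous corollary, and in particular the embeddings (initial injective maps) are closed under composition and pullback. Thus we work in the standard framework $(\mathsf{Set}\text{-based } \mathcal{E}, \mathcal{M}=\text{embeddings})$ used in \cite{DT}. The operator $\cl$ is the strong-topology closure, so it is idempotent and hereditary, hence weakly hereditary. By Theorem \ref{iso} it agrees with the regular closure, so it is a closure operator in the categorical sense: it satisfies extension, monotonicity and continuity, the last because non-expansive maps are continuous for the strong topologies.

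Second, I would construct the factorization explicitly, so as not to rely only on the citation. For a non-expansive $f:(X,(d_\l)_\l)\rw(Y,(e_\l)_\l)$, let $M=\cl(f(X))\subseteq Y$ carry the initial structure, namely the restrictions of the $e_\l$. This is a probabilistic metric space because $\PM$ is closed under mono subobjects. Then $f=m\circ g$, where $g:X\rw M$ is the corestriction and $m$ is the inclusion. The map $m$ is a closed embedding. The map $g$ is non-expansive by initiality, and it is dense: since $\cl$ is hereditary, the closure of $f(X)$ in $M$ is $M\cap\cl(f(X))=M$.

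Third, the diagonal property. Suppose $v\circ e=m\circ u$ with $e$ dense and $m$ a closed embedding. The continuity property of $\cl$ gives
\[ v(\cl(e(X)))\subseteq \cl(v(e(X)))\subseteq \cl(m(\cdot)) = m(\cdot), \]
because $m$ has closed image. Hence $v$ factors set-theoretically through $m$, and initiality of $m$ makes this factor non-expansive. Uniqueness follows because $m$ is mono. Closure of both classes under composition with isomorphisms is clear. Composition of dense maps uses continuity of $\cl$ together with idempotency. Composition of closed embeddings uses the fact that $\cl$ is hereditary, which makes closed subsets of a closed subspace closed.

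The main obstacle is bookkeeping rather than substance: the cited general theorem has to be matched with the convention that $\mmm$ means embeddings and not all monos. I would handle this by giving the direct diagonalization argument above, so that only the hereditary, idempotent and continuity properties of the topological closure are used.
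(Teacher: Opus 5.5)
Your proposal is correct and follows the paper's reasoning: the paper's entire argument is the observation that $\cl$ is idempotent and (weakly) hereditary, so the general Dikranjan--Tholen theorem on closure operators yields the factorization system of $\cl$-dense maps and $\cl$-closed embeddings. Your explicit factorization $f=m\circ g$ through $\cl(f(X))$ and your diagonalization via continuity of $\cl$ are sound and simply unpack that cited theorem in this concrete setting. One small slip: closure of embeddings under composition and pullback comes from the monotopological structure of $\PM$, not from the corollary on regular monomorphisms.
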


\begin{proposition}
$\PM$ is cowell-powered.
\begin{proof}
Since the closure on $(Y, (e_\l )_\l) \in \PM$ is the same as the one in the strong topological space $(Y, \ttt_0)$ which is a Hausdorff space, the method for proving that $\mathsf{Haus}$ is cowell-powered \cite{P} is also relevant here. \\

Given $f: (X, (d_\l )_\l)) \rw (Y, (e_\l )_\l)$ non-expansive and dense, then for every $y \in Y$ the neighborhoodfilter $\vvv(y)$ of $y$ in the strong topology has a trace on $f(X).$ 
Let $i: f(X) \rw Y$ be the inclusion map, because of the Hausdorff property of $(Y, \ttt_0)$ different points in $Y$ create different traces $i^{-1} (\vvv(y)$.  
The map
$$
g: Y \rw \ppp(\ppp(f(X))),
$$
 defined by $g(y) = i^{-1} (\vvv(y)$ is therefore injective. Since there exists an injective map from $f(X)$ to $X$, there is an injective map
$$
h: \ppp(\ppp(f(X))) \rw \ppp(\ppp(X)).
$$
Hence the map 
$$
h \circ g: Y \rw \ppp(\ppp(X))
$$
is injective. Let $Z \subseteq \ppp(\ppp(X))$ be its image. Then the structure $(e_\l)_\l$ can be transported to $Z$ such that $h \circ g: Y  \rw Z$
becomes an isomorphism in $\PM.$ Hence there exists a representative set of epimorphisms.
\end{proof}

\end{proposition}

\section{The embedding of $\mathsf{Met}^\infty$ in $\PM$} 

In this section we consider the category $\met^\infty$ of extended metric spaces with non-expansive maps.
There is an embedding of $\met^\infty$ in $\PM$ sending an extended metric space $(X,d)$ to the probabilistic metric space $(X, (d_\l)_\l)$ with $d_\l = d$ for all $\l.$ Clearly this is a full embedding. We show that it is both concretely coreflective and reflective in $\PM$.

\begin{theorem}
$\met^\infty$ is concretely coreflectively embedded in $\PM$.
\begin{proof}
Let $(X, (d_\l)_\l)$ be an arbitrary probabilistic metric space. Let $(X,d)$ be defined as 
$$
d(x,y) = \sup_\l d_\l(x,y).
$$
We clearly have $d(x,x) = 0, d(x,y) = d(y,x)$ and for $x \not = y$ we have $d(x,y) >0.$
That the triangle inequality holds follows from {\bf(UT)}.
For $\ve$ arbitrary determine $\l, \l'$ satisfying $(1-\ve) < (1-\l) * (1-\l').$ For $x,y,z \in X,$
$$
d_\ve(x,y) \leq d_\l(x,z) + d_{\l'} (z,y) \leq d(x,z) + d(z,y).
$$
Hence $d(x,y) \leq d(x,z) + d(z,y)$.

Let $(Z,e)$ be an extended metric space embedded in $\PM$ and $f: (Z,e) \rw (X, (d_\l)_\l)$ be a non-expansive map. Then the map $\overline{f} : (Z,e) \rw (X, d)$ is non-expansive since $d_\l(f(z), f(z')) \leq e(z,z')$ for all $\l,$ making the following diagram commute.

\begin{align*}
\xymatrix{(Z,(e_\l)_\l)\ar[r]^{\overline{f}}\ar[dr]_{f} & (X,d) \ar[d]^{1_X} \\ & (X,(d_\l)_\l)}
\end{align*}
\end{proof} 
\end{theorem}

\begin{theorem}
$\met^\infty$ is reflectively embedded in $\PM$.
\begin{proof}
Let $(X, (d_\l)_\l)$ be an arbitrary probabilistic metric space. Consider $(X,d_1)$ with $d_1$ the distance for $\l = 1.$ Applying {\bf(US)} we have that the distance satisfies $d_1(x,y) = d_1(y,x).$ In order to force the triangle inequality we use the standard method and put
$$
\tilde{d_1} (x,y) = \inf \{ \sum_{j=1}^{n} d_1(x_i, x_i+1) \ | \ n \in \N_0, x_1 = x, x_n = y \}.
$$
Then the modification $(X,\tilde{d_1})$ is a reflection, being the largest extended pseudo metric space smaller than $(X,d_1)$.
We now have the following commutative diagram given $(Z,e)$ an extended metric space embedded in $\PM$ and $f: (X, (d_\l)_\l) \rw (Z,e)$ a non-expansive map.

$$
\xy
\xymatrix@M+2pt{ (X, (d_\l)_\l)  \ar[r]^{f} \ar[d]_{ 1_X} &  (Z,e)  \\  (X,d_1) \ar[r]_{1_{X}} \ar[ur]_{\overline{f}}
& (X,\tilde{d_1}) \ar[u]_{\tilde{f}}}
\endxy
$$

That $\overline{f}$ and $\tilde{f}$ are indeed non-expansive follows easily:
$$e(f(x), f(y)) \leq d_\l(x,y),  \forall \l \Rightarrow e(f(x), f(y)) \leq d_1(x,y) \Rightarrow e(f(x), f(y)) \leq \tilde{d_1}(x,y).$$
$(X,\tilde{d_1})$ is a pseudo metric space but it need not satisfy separation. Hence we apply the standard method to force separation by defining a suitable quotient.

$$x \simeq x' \Leftrightarrow \tilde{d_1}(x,x')  = 0$$ is an equivalence relation and
let $\varphi: X \rw X^*$ be the associated quotient map mapping $x$ to its equivalence class $x^*$.
Then  on $X^*$ we put 
$$
\tilde{d_1}^*(x^*, y^*) = \tilde{d_1} (x,y).
$$
This definition is unambiguous in view of the triangle inequality of $\tilde{d_1},$ satisfies symmetry, the triangle inequality and separation.
Next we consider the following diagram with $(Z,e)$ an extended metric space and $\tilde{f} : (X, \tilde{d_1}) \rw (Z,e)$ non-expansive and we show that a unique function $f^*$ exists, making the diagram commutative.
$$
\xymatrix{(X, \tilde{d_1})  \ar[d]_{\varphi} \ar[r]^{\tilde{f}}&(Z,e)  \\
(X,\tilde{d_1}^*) \ar[ru]_{f^*}&
}
$$ 
That $f^*$ exists follows from 
$$
\varphi(x)= \varphi(x') \Rightarrow x \simeq x' \Rightarrow \tilde{d_1}^* (x^*,x'^*) = \tilde{d_1}(x,x') = 0
$$ 
and since $e( \tilde{f}(x), \tilde{f}(x')) \leq \tilde{d_1} (x,x') = 0$ we can conclude that $\tilde{f} (x) = \tilde{f} (x').$
Uniqueness of $f^*$ follows from the surjectivity of $\varphi.$ The map $\varphi$ is a quotient in $p\met^\infty$ and hence $f^*$ is non-expansive.
\end {proof}
\end{theorem}

\vspace{1cm}

\vspace{1cm}
\noindent {E. Colebunders \\
Department of Mathematics and Data Science, Vrije Universiteit Brussel, Pleinlaan 2, 1050 Brussel, Belgi\"{e}\\  \emph{evacoleb@vub.be}\\
Department of Mathematics, Universiteit Antwerpen, Middelheimlaan 1, 2020 Antwerpen, Belgi\"{e}}\\
\emph{eva.colebunders@uantwerpen.be}\\
 \\
R. Lowen \\
Department of Mathematics, Universiteit Antwerpen, Middelheimlaan 1, 2020 Antwerpen, Belgi\"{e}}\\
\emph{bob.lowen@uantwerpen.be}

\end{document}